\newtheorem{theorem}{Theorem}[section]
\newtheorem{lemma}[theorem]{Lemma}
\newtheorem{corollary}[theorem]{Corollary}
\newtheorem{definition}[theorem]{Definition}
\newtheorem{construction}[theorem]{Construction}
\newtheorem{remark}[theorem]{Remark}
\newtheorem{example}[theorem]{Example}
\title{On the complexity of zero-dimensional multiparameter persistence}
\author{Jacek Brodzki}
\email{j.brodzki@soton.ac.uk}
\author{Matthew Burfitt}
\email{m.i.burfitt@soton.ac.uk}
\author{Mariam Pirashvili}
\email{M.Pirashvili@soton.ac.uk}
\address{Department of Mathematical Sciences, University of Southampton, Highfield, Southampton, SO17 1BJ, United Kingdom}
\thanks{This research was supported by the EPSRC grant EP/N014189/1.}
\begin{document}

\maketitle

\begin{abstract}
    Multiparameter persistence is a natural extension of the well-known persistent homology, which has attracted a lot of interest. However, there are major theoretical obstacles preventing the full development of this promising theory.
    
    In this paper we consider the interesting special case of multiparameter persistence in zero dimensions which can be regarded as a form of multiparameter clustering.
     In particular,  we consider the multiparameter persistence modules of the zero-dimensional homology of filtered topological spaces when they are finitely generated. Under certain assumptions, we characterize such modules and study their decompositions.
    In particular we identify a natural class of representations that decompose and can be extended back to form zero-dimensional multiparameter persistence modules.
    
    Our study of this set of representations concludes that despite the restrictions, there are still infinitely many classes of indecomposables in this set.
\end{abstract}


\section{Introduction}\label{sec:Introduction}

Persistent homology assigns numerical invariants to a topological space. In practice, the necessary input is provided by a filtered simplicial complex created from a sample of the space of interest \cite{Carlsson09, Edelsbrunner2014}. This leads to the formation of a persistence module, by assigning a vector space -- the associated homology group -- to each stage of the filtration, together with the linear transformations stemming from the inclusion maps of the filtration.

When the filtration is given by one parameter, i.e. when we have a nested sequence of subcomplexes, the resulting persistence module is a representation of the so-called $\mathcal{A}_n$ quiver. Due to a deep theorem by Gabriel in representation theory, this quiver is known to be of finite type. This means that every representation of it can be decomposed into a direct sum of a finite set of isomorphism classes of indecomposables.

This property only holds for representations of quivers whose shape is that of a Dynkin or Euclidean diagram. This is very restrictive and for most quivers the problem is known to be extremely hard.

The generalisations to multiparameter persistence modules are fraught with difficulties due to algebraic problems in the decomposition theory of such modules.
These are quiver representations over partially ordered sets with commutative restrictions. In practice, the poset considered is often a commutative $n$-dimensional grid.
In this paper we justify considering persistence modules over finite posets by showing that the algebraic problems are equivalent to $\mathbb{N}^n$ and $\mathbb{R}^n$ modules with a finite encoding.
However, even this restriction to finite bounded acyclic quiver representations, corresponding to multiparameter persistence, does not in any clear way simplify the problem \cite{Donovan1973, Nazarova1973}.
Multiparameter persistence modules also arise in theoretical works in topological data analysis. However, because the underlying quiver is of a wild type, it becomes much more challenging to derive numerical invariants. It has been shown \cite{Carlsson2009} that one cannot assign a complete discrete invariant to such modules and in dimensions greater than zero all modules can be realised as the homology of some filtration \cite{Harrington2017}.

Multiparameter persistence is strongly motivated by applications such as including density variables \cite{Wasserman2018} to extend the stability of persistent homology with respect to noise
and to incorporate other data specific parameters, the uses of which have already been demonstrated \cite{Carlsson2008, Adcock2012, keller2018}.

Studying multiparameter persistent homology in zero dimensions is equivalent to considering multiparameter clustering, which is relevant to problems concerning clustering density.
In fact, the origins of persistent homology lie in the study of the structure of connected components \cite{frosini1990} and has been applied to the stability of hierarchical clustering \cite{Carlsson2010, Carlsson2010b}.

The description of zero dimensional multiparameter persistence modules becomes somewhat more combinatorial than that of classical quiver representation theory so that we might begin to understand their decomposition theory. The reason for this is that the class of persistence modules arising  as zero-dimensional persistence is not necessarily stable 
with respect to natural representation-theoretic transformations, for example,  the reflection functors.
It therefore makes sense to understand the special case of zero-dimensional multiparameter persistence modules using more combinatorial tools.

This is the question we are investigating in this paper. Restricting our attention 
to the zero-dimensional case allows us to consider a smaller class of representations. As a result, we are able to show that in general, the isomorphism classes of indecomposable representations of the remaining modules are still infinite.

To this end, we introduce component modules corresponding to the persistence modules obtained from zero-dimensional multiparameter persistence
and the weaker notion of semi-component modules, obtained as decompositions of component modules.
We show that provided there is a minimal generator in the component module, it splits as an interval and semi-component module.
A situation that occurs in applications such as hierarchical clustering.
Furthermore any component module can be recovered from a semi-component module up to a choice of right inverse dependent only on the placement of the minimal generator.

A number of approaches to multiparameter persistence have been developed \cite{Corbet2018, Harrington2017, Lesnick2015, Miller2017, Scolamiero2017, Skryzalin2017}.
Most relevant to the content of our paper, much work has also been devoted to studying the structure of multiparameter persistence modules under restrictions.
For two-parameter persistent homology, Cochoy and Oudout \cite{Cochoy2016} present a local conditions that characterises persistence module emerging as a sum of interval modules.
However in simple cases on $\mathbb{N}^m$, Buchet and Escolar \cite{Buchet2018, Buchet2019} have given realistically plausible topological examples that realise complex infinite families of indecomposable modules, in which any one parameter persistence module can be obtained by the restriction of a two-parameter indecomposable module.
Motivated by multi-filtrations arising form hierarchical clustering Bauer, Botnan and Oppermann \cite{Bauer2019} investigated two-parameter persistence modules in which the parameter choices induces injective or surjective maps vertically or horizontally within the module, giving a full classification of representation types.


\section{Background}\label{sec:Background}

	In this section we recall the language and relevant results on multiparameter persistence useful to us in subsequent sections.
	Unless otherwise stated assume we are working over a field $\mathbb{K}$ and all vector spaces over $\mathbb{K}$ are finite dimensional.

	
	\subsection{Partially ordered sets}
	
	Multiparameter persistence modules occur indexed over partially ordered sets,
	therefore the mathematics of theses structures plays a fundamental part in their description.
	We recall now notations concerning partially ordered sets used later in this work.
	
	\begin{definition}\label{def:PartialOrder}
	A \emph{partially ordered set}, a \emph{partial order} on a set or a \emph{poset} is a set $P$ with a binary relation $\leq$ that satisfies the following.
	    \begin{enumerate}
	        \item
	        For any $p \in P$, then $p \leq p$ (Reflexivity); 
	        \item
	        For any $p,q \in P$, if $p\leq q$ and $q\leq p$ then $p=q$ (Antisymmetry);
	        \item
	        For any $p,q,r \in P$, if $p\leq q$ and $q\leq r$ then $p \leq r$ (Transitivity).
	    \end{enumerate}
	\end{definition}
	
	A partial order can be seen as a category with an object for each element of $P$ and a unique morphism $\alpha \colon p \to q$ whenever $p \leq q$ for $p,q\in P$.
	A morphism of partially ordered sets coincides with a morphism between the associated categories.
	Following this section, throughout the remainder of this paper we view a partially ordered set as a category.
	
	We use the following terminology on partial orders throughout the rest of the paper.
	A poset $P$ is \emph{finite} if $P$ is finite. 
	Two elements $p,q \in P$ are said to be \emph{comparable} if either $p \leq q$ or $q \leq p$.
	A poset is \emph{connected} if any two elements can be written as the endpoints of a sequence of pairwise comparable elements.
    We say element $p \in P$ is \emph{minimal} if for any comparable $q \in P$, then $p\leq q$.
    Similarly an element $p \in P$ is said to be \emph{maximal} if for any comparable $q \in P$, then $q \leq p$.
    A poset is \emph{bounded} if it has unique minimal and maximal elements.
	For $p \leq q$ such that $p\neq q$ and for $r\in P$ such that $p\leq r \leq q$ implies $r=p$ or $r=q$,
	then we say $q$ \emph{covers} $p$ and we call $(p,q)$ a \emph{cover} relation.
	A graph with vertices the element of the partial order and edges the cover relations between them is called a \emph{Hasse diagram}. 
	
	A subset $C\subseteq P$ is called a \emph{chain} if it is \emph{totally ordered},
	that is if any two elements of $C$ are comparable.
    A chain is called a \emph{maximal chain}, if it is not contained in a larger chain.
    The \emph{height}, $h(P)$ of a partial order $P$ is the cardinality of a maximal chain in $P$.  
    An \emph{antichain} is a subset of $P$ in which no pair of elements are comparable.
    Given two partially ordered sets $P$ and $Q$ their product is the partially ordered set on $P\times Q$ such that,
    \begin{equation*}
        (p,q) \leq (p',q') \text{ if and only if }
        p \leq p' \text{ in } P \text{ and } q \leq q' \text{ in } Q.
    \end{equation*}
	
	
    \subsection{Multiparameter persistence modules}\label{sec:MultiparameterModules}
	
	In their most general form multiparameter persistence modules are considered indexed over an arbitrary partially ordered set.

	\begin{definition}\label{PersistenceModule}
		A (multiparameter) {\it persistence module} $M$ over a field $\mathbb{K}$ is a functor from a poset $P$ to the category of finite dimensional $\mathbb{K}$-vector spaces and linear maps.
		A persistence module $M$ is called finite if $P$ is finite.
	\end{definition}
	
	For simplicity from now on we call a multiparameter persistence module a persistence module.
	In the usual way, we obtain a category of persistence modules and natural transformations.
	Given a persistence module $M$ the \emph{dimension vector} of $M$ is 
	\begin{equation*}
	    \{ \dim{M(p)} \}_{p\in P},
	\end{equation*}
	the set of dimensions of vector spaces assigned to each object of the partial order $P$.
	Since persistence modules are indexed over a partial order we present them as a diagram, it makes sense to present them as Hasse diagrams, (adding direct edges).
	That is vertices are labelled with the dimension of the vector space and cover relations are labelled with the corresponding linear map with respect to a chosen basis for each vector space.
	
    The modules naturally corresponding to the barcode decomposition in one parameter persistence (persistence over a totally order set) correspond to intervals in the order.
	The notion of an interval module is generalized easily to the multiparameter setting.

	\begin{definition}\label{def:IntervalComponent}
		A $\mathbb{K}$ persistence module $M$ over a partially ordered set $P$ is called an \emph{interval module} if:
		\begin{enumerate}
			\item
			For every object $p\in P$, the vector space $M(p)$ is isomorphic to $\mathbb{K}$ or $0$.
			\item
			For all morphisms $\alpha \colon q\to p$ in $P$ such that $M(q) \cong M(p) \cong \mathbb{K}$, the linear maps $M(\alpha)$ can with respect to some basis be chosen simultaneously as the identity.
			\item
			For morphisms $\alpha \colon q\to p$ and $\beta \colon p \to l$ in $P$
			if $M(q) \cong M(l) \cong \mathbb{K}$, then $M(p) \cong \mathbb{K}$.
		\end{enumerate} 
	\end{definition} 
	
	Note that \cite{Asashiba2018} if part (2) of the definition is weakend to require only isomorphism, then the definition is weakened for modules that cannot be embedded in $\mathbb{N}^m$ for $m > 2$.  
	
	Multiparameter persistence modules are of interest because they arise as the homology of filtered spaces.
    
	\begin{definition}\label{def:FilteredSpace}
		A {\it filtered topological space} is a functor $X$ from a poset $P$ to the category of topological spaces and inclusions, such that the image of every object under $X$ has finitely many path components.
	\end{definition}

    We assume here that the filtered space $X$ is a simplicial complex filtered by subcomplexes and that the inclusions are simplicial inclusions.
    This can be justified by the fact that topological spaces arising form data, such as the Vietoris-Rips and Alpha complexes obtained form point cloud data are realised in this form.

	
	\subsection{The theory of quiver representations}\label{sec:Quiver}
	
	A finite poset can be identified with a \emph{quiver}.
	This identification is an important step in applying well-known results from the theory of quiver representations to the setting of persistence modules.
	In particular the theory of quiver representations can be applied to multiparameter persistence modules.
	Unless otherwise stated all the proofs of statements made in this section can be found in \cite{Skowronski2007}.
	
	\emph{Quivers} can be thought of as directed graphs, or rather multigraphs. In general, both multiple arrows between nodes and loops (i.e. arrows beginning and ending at the same node) are allowed. However, in the case of persistence modules, we only deal with quivers that are simple directed graphs.
	A \emph{representation} of a quiver $Q$ over a field $\mathbb{K}$ is the association of a $\mathbb{K}$-vector space to each node of $Q$ and a linear map between the vector spaces to each arrow.
	If each vector space is finite dimensional then the representation is called \emph{finite dimensional}.
	The direct sum of two quiver representations is the quiver representations whose vector spaces and linear maps at are the direct sums of those form the two representations.

	\begin{definition}\label{def:IndecomposableModule}
	    A representation of a quiver $Q$ is called \emph{indecomposable} if it cannot be expressed as a direct sum of at least two non-zero representations of $Q$.
	\end{definition}

    There is an equivalence of categories between a quivers representations and left modules over their path algebra.
    It is then a straightforward consequence of the classical Krull-Remak-Schmidt theorem, that a decomposition of a finite-dimensional quiver representation into a direct sum of indecomposables is unique up to reordering and isomorphism.

	The following lemma appears as the most straightforward characterisation of indecomposable quiver representations, see \cite{Brion2012}.
	However here for ease of use, we state the lemma in terms of persistence modules.
	
	\begin{lemma}\label{lem:DecompositionCharectersiation}
		A persistence module is indecomposable if and only if all self natural transformations are either an isomorphism or nilpotent. 
		In particular, given a natural transformation $f$ of persistence module $M$ we have
		\begin{equation*}
			M= \text{ker}(f^n) \oplus \text{im}(f^n),
		\end{equation*}		
		for $n$ large enough and where $f^n$ is the composition of $f$ with itself $n$ times.
	\end{lemma}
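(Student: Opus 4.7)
The plan is to adapt Fitting's lemma to the category of persistence modules. For a self natural transformation $f \colon M \to M$, the key observation is that each component $f_p \colon M(p) \to M(p)$ is a linear endomorphism of the finite-dimensional vector space $M(p)$. The descending chain $\text{im}(f_p) \supseteq \text{im}(f_p^2) \supseteq \cdots$ and the ascending chain $\ker(f_p) \subseteq \ker(f_p^2) \subseteq \cdots$ therefore stabilize at some $n_p$, and the classical Fitting argument gives $M(p) = \ker(f_p^{n_p}) \oplus \text{im}(f_p^{n_p})$. Taking $n$ large enough to work for every $p$ (which is possible in the finite poset setting we ultimately care about) produces a single exponent uniform over the poset.

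First I would verify that $\ker(f^n)$ and $\text{im}(f^n)$ are genuine sub-persistence modules of $M$. This is where the naturality of $f$ is used: the structure maps $M(\alpha) \colon M(q) \to M(p)$ commute with $f$, and so they restrict and corestrict to maps $\ker(f_q^n) \to \ker(f_p^n)$ and $\text{im}(f_q^n) \to \text{im}(f_p^n)$. Combined with the componentwise splittings this yields the announced decomposition $M = \ker(f^n) \oplus \text{im}(f^n)$, establishing the ``in particular'' clause directly. The forward direction of the biconditional is then immediate: if $M$ is indecomposable, applying the decomposition to an arbitrary endomorphism $f$ forces either $\ker(f^n) = 0$ or $\text{im}(f^n) = 0$. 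In the first case $f^n$ is componentwise injective, hence bijective by finite-dimensionality, so $f$ itself is an isomorphism; in the second case $f^n = 0$, so $f$ is nilpotent.

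For the converse I would argue by contraposition. If $M = M_1 \oplus M_2$ with both summands nonzero, then the natural projection onto $M_1$ followed by inclusion into $M$ is idempotent, hence nonzero and not nilpotent, while at the same time it vanishes on $M_2$ and so is not an isomorphism, supplying the required endomorphism that is neither. The main technical point requiring care is the existence of a uniform stabilization index $n$ across the whole poset: in the finite poset setting this follows by taking the maximum of the $n_p$, but in greater generality one must appeal to the finite-support or finite-generation hypotheses implicit in the paper's setting so that the stabilization times do not grow unboundedly. Once this is secured, the rest is a clean transcription of the module-theoretic Fitting lemma into the functor category of persistence modules.
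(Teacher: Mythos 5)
Your proof is correct and is the standard Fitting's-lemma argument. The paper itself states this lemma without proof, deferring to the cited reference (Brion, 2012) for the quiver-representation version, so your write-up supplies exactly the argument the paper leaves implicit; you also correctly flag the one point that is often glossed over, namely that a \emph{uniform} stabilization exponent $n$ across the poset requires finiteness of the poset or a bound on the dimensions of the $M(p)$ --- a hypothesis that holds in every place the paper actually invokes the lemma.
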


    It is straightforward for example to show using Lemma \ref{lem:DecompositionCharectersiation} 
	that all persistence modules $M$ over $P$ satisfy the first two conditions of Definition \ref{def:IntervalComponent} are indecomposable persistence modules, since for $q\in Q$ if in $M(q)$ is non-trivial then the choice of a natural endomorphism $f$ at $M(q)$ determines the choice at all other $M(p)$ by naturality, hence $f$ is either a natural isomorphism or trivial.


\section{Finite encoding}\label{sec:Encoding}

    In this section we justify the abstract setting for multiparameter persistence modules used in this work.
    The term \emph{finite encoding} is used by Miller \cite{Miller2017}, where given a persistence module $M$ over $Q$ an encoding of $M$ by a poset $P$ is a poset morphism 
    \begin{equation*}
        \pi\colon Q \to P,
    \end{equation*}
    together with a $P$-module $H$ such that $M$ is the pullback of $H$ along $\pi$, which is naturally a $Q$-module. The encoding is finite if
    \begin{enumerate}
        \item $P$ is finite, and
        \item the vector space $H_p$ is finite dimensional for all $p\in P$.
    \end{enumerate}

    The next theorem shows that in terms of decomposition, considering finite multiparameter persistence modules is equivalent to considering $\mathbb{N}^n$ and $\mathbb{R}^n$ modules with a finite encoding.
    To prove the theorem we first give the following additional terminology.
     Given $p_1,\dots,p_m$ in poset $P$, we call the set $J$ the \emph{join-set} of $p_1,\dots,p_n$ if it satisfies the following conditions.
    \begin{enumerate}
        \item
        For each $j\in J$ we have $j\geq p_i$ for $i=1,\dots,m$,
        \item
        any $k\in P$ satisfying the first condition is either incomparable to each  $j\in J$, $j\neq k$,  or $j \leq k$.
    \end{enumerate}
    If the join set contains a single element then this is the usual notion of a join.
    
    \begin{theorem}\label{thm:EncodingEmbedding}
        Every finite persistence module with a single maximal and minimal element in its poset can be realised as the finite encoding of some $\mathbb{N}^n$ module, and by extension, a right continuous $\mathbb{R}^n$ module constant on cubical regions.
    \end{theorem}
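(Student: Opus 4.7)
The plan is to realise $M$ as the $P$-encoding of an $\mathbb{N}^n$-module $N = \pi^*M$ by constructing an order-preserving surjection $\pi \colon \mathbb{N}^n \to P$ that admits an order-embedding section $\iota \colon P \hookrightarrow \mathbb{N}^n$. Once $\pi$ exists, the pair $(\pi, M)$ is automatically a finite encoding in the sense of Miller because $P$ is finite and the vector spaces $M(p)$ are finite dimensional.

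First I would embed $P$ order-preservingly into a grid: take $n = |P|$ and set $\iota(p) := \mathbf{1}_{\{q \in P \,:\, q \leq p\}} \in \{0,1\}^n \subset \mathbb{N}^n$, so that $\iota(\hat 0) = \mathbf{0}$ and $\iota(\hat 1) = (1,\dots,1)$. A direct transitivity argument shows $\iota(p) \leq \iota(q)$ if and only if $p \leq q$. Vectors $v \in \mathbb{N}^n$ outside the box $[\mathbf{0}, \iota(\hat 1)]$ can be folded back by replacing $v$ with $v \wedge \iota(\hat 1)$, which reduces the construction to points of the unit cube without loss of order.

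Next I would define $\pi$ by a ``round to $P$'' rule. For $v$ in the unit cube consider the down-set $S_v := \{p \in P : \iota(p) \leq v\}$, which contains $\hat 0$ and hence is non-empty; its join-set in $P$, in the sense introduced just before the theorem, is non-empty because $\hat 1$ is an upper bound. I then set $\pi(v)$ to be the element of this join-set and check two things: (a)~monotonicity, which follows from the inclusion $S_v \subseteq S_{v'}$ whenever $v \leq v'$ and the fact that a larger down-set has a larger join-set in the partial order induced by $P$; and (b)~the identity $\pi \circ \iota = \mathrm{id}_P$, which is immediate since $\iota(p)$ has $S_{\iota(p)} = \{q : q \leq p\}$, whose join-set is $\{p\}$. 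Passing to $\mathbb{R}^n$ is then done by setting $\tilde N(x) := N(\lceil x \rceil)$ on the positive orthant (truncated beyond $\iota(\hat 1)$ as above) with structure maps inherited from $N$; this is right-continuous and constant on the half-open unit cubes $[v,v+\mathbf{1})$ by construction.

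The main obstacle is the well-definedness of $\pi$ when the join-set in step two has more than one element, since then no canonical single choice exists and naïve selections may break monotonicity across the grid. The remedy, which is exactly what the auxiliary join-set terminology is introduced to formalise, is either to enlarge $n$ with extra separating coordinates for each incomparable pair whose join-set is multi-valued, or to refine $\iota$ so that the relevant join-sets become singletons. Verifying that a sufficiently fine choice of embedding always achieves this for finite bounded $P$ is the chief technical content of the proof; once it is in hand, the $\mathbb{R}^n$ extension and the identification of $M$ as a finite encoding are formal.
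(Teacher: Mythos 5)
Your approach is genuinely different from the paper's: you embed $P$ into $\{0,1\}^{|P|}$ via the characteristic function $\iota(p)=\mathbf{1}_{\{q\leq p\}}$ and then define $\pi(v)$ pointwise by rounding the down-set $S_v=\{p:\iota(p)\leq v\}$ up to an element of its join-set, whereas the paper covers $P$ by maximal chains, uses these to seed the axes of $\mathbb{N}^n$, and then defines $\pi(y)$ \emph{recursively} as an element of the join-set of the already-assigned values $\pi(U_y)$, $U_y=\{m\leq y\}$.

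The gap is in your monotonicity step. You assert that $S_v\subseteq S_{v'}$ implies that ``a larger down-set has a larger join-set,'' but this is false, and it is exactly where the pointwise rule breaks. Take $P=\{\hat 0<a,b<c,d<\hat 1\}$ with $a,b$ incomparable and $c,d$ incomparable, and let $v,v'\in\{0,1\}^{|P|}$ be such that $S_v=\{\hat 0,a,b\}$ (join-set $\{c,d\}$) and $S_{v'}=\{\hat 0,a,b,d\}$ (join-set $\{d\}$), with $v\leq v'$. If the rounding rule selects $\pi(v)=c$, then $\pi(v)\not\leq\pi(v')=d$ and $\pi$ is not a poset map. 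There is no fixed tie-breaking convention (e.g.\ a linear extension) that repairs this, because the forced value at $v'$ depends on which upper bound $d$ happens to become available first, not on the convention. You flag this issue yourself (``no canonical single choice exists and na\"{\i}ve selections may break monotonicity'') and propose to fix it by enlarging $n$ or refining $\iota$, but neither repair is carried out, and it is not clear either one works; the chief technical content you defer is precisely the content of the theorem. The paper's construction sidesteps the problem by making the choice at $y$ depend on the choices already made on $U_y$: since every element of the join-set of $\pi(U_y)$ dominates all of $\pi(U_y)$ by definition, monotonicity is automatic no matter which join-set element is picked, and no coherence condition across $\mathbb{N}^n$ needs to be verified separately. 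Your embedding $\iota$ and the identity $\pi\circ\iota=\mathrm{id}_P$ are correct and could be kept, but the definition of $\pi$ off the image of $\iota$ needs to be replaced by a recursive one in the spirit of the paper.
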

    
    We can extend the theorem to all finite multi-parameter modules if we identify them with the module obtained by attaching a zero-dimensional vector space as the minimal and or maximal elements of the module.
    
    \begin{proof}
        Let $M$ be a finite multiparameter persistence module over poset $P$
        and let
        \begin{equation*}
            C_1,C_2\dots,C_n
        \end{equation*}
        be a cover of $P$ by maximal chains.
        We construct a poset morphism $\pi \colon \mathbb{N}^n \to P$ such that its pullback over $M$ is a well defined $\mathbb{N}^n$ module for which $\pi$ is a finite encoding.
        Let $l_i$, for $i=1, \dots n$,  be the length of the maximal chain $C_i$. Select the $l_i$  smallest elements of axis $i$  and assign them bijectively to 
        the elements of $C_i$ in an order-preserving way.
        As the chains are maximal, $\pi$ is well defined at the origin sending it to the minimal element of $P$.
        
        We now extend $\pi$ working inductively on elements of $\mathbb{N}^n$ using an analogue of Cantor's diagonal argument on maximal antichains in $\mathbb{N}^n$ with respect to their natural order. 
        Let $y\in \mathbb{N}^n$ on which $\pi$ has not yet been defined, and let 
        $U_y\subset \mathbb{N}^n$ be the set of elements $m\leq y$. We let $\pi(y)$ be an element of the join-set of the subset $\pi(U_y)\subseteq P$. 
        Since we assume a single maximal element of $P$, the join-set of any set of elements is always non-empty.
        
        By construction, for every $y\in \mathbb{N}^n$, $\pi(x) \leq \pi(y) $ for every 
        $x\in\mathbb{N}^n$ such that $x\leq y$ and so $\pi$ is a poset map as required. 
        Hence $\pi$ is a well defined encoding of its pullback modules and it is also finite as $P$ is finite.
    \end{proof}

\begin{remark}
    There are classical theoretical results in algebraic topology that justify Miller's definition of finite encoding as a sensible approach to multiparameter persistence.
    
    A function that allows finite encoding is one that has constant homology on the preimages of compact regions of the range, i.e that has regions of critical values that form submanifolds of the range of one lesser dimension.
    In particular, finite encoding restricts to the usual finiteness assumption present for one-parameter persistence. Therefore, the setting of modules over an arbitrary finite partially ordered set considered here is of particular interest.
    We mention here two results from topology, which apply to the case of smooth functions
    \begin{equation*}
        f_0,f_1,\dots,f_n\colon \mathcal{M} \to \mathbb{R}
    \end{equation*}
    on a smooth $d$ dimensional manifold $\mathcal{M}$.
    Sard's Theorem \cite{sard} states that in the case when $\mathcal{M} = \mathbb{R}^m$,
    while there might be many critical points in $\mathbb{R}^m$, the image, i.e. the set of the critical values, has Lebesgue measure $0$ in $\mathbb{R}^n$. This is of course more general than the set of critical values being a submanifold.
    
    The \emph{Jacobi set} of $f_0,f_1\dots,f_n$ is the closure of
    \begin{equation*}
        \{ x\in \mathcal{M} \; | \; x \text{ is a critical point of } f_i \text{ for some } i=0,1 \dots ,n  \}.
    \end{equation*}
    Edelsbrunner and Harer \cite{Edelsbrunner2004} worked on the structure of such critical sets for the applied setting.
    In particular they show that when $n < 4$ the Jacobi set is a smoothly embedded sub-manifold of $\mathcal{M}$.
\end{remark}


\section{Topological restrictions on zero-degree persistence modules}\label{sec:Topology}

It is stated \cite{Carlsson2009} and proved in \cite{Harrington2017}, that when $P=\mathbb{N}^n$  every finitely generated multiparameter persistence module over $P$ with coefficients in $\mathbb{F}_p$ or $\mathbb{Q}$ can be realised as  homology in positive degrees of a filtered simplicial complex. 
Using Theorem \ref{thm:EncodingEmbedding}, it is clear that these results will also hold over an arbitrary finite partially ordered set. 
However, for $i=0$, we see that the situation is more restricted.

Let $\pi_0(X)$ be the set of (path-)connected components of the simplicial complex $X$.
It is an  elementary fact in topology that the image of a continuous map on a (path-)connect component is (path-)connected.
So $\pi_0$ is a functor from the category of simplicial complexes and continuous maps to the category of sets.
It is a classical fact from algebraic topology \cite{Hatcher2002} that $H_0(X)$ is the free vector space generated by $\pi_0(X)$. Therefore, when we talk about a basis for $H_0(X)$, we always talk about this particular basis (unless stated otherwise). It follows that if $f:X\to Y$ is a continuous map, then the induced map $H_0(X)\to H_0(Y)$ has the property that it takes basis elements to basis elements. Therefore, the matrix representing this map consists of zeros and ones, with exactly one $1$ in each column.

\subsection{Factoring through \emph{Sets}}\label{sec:Sets}
The above implies that the induced maps between the homology groups are factoring through the category of sets. This property is rather fragile, however. Standard transformations in representation theory, like reflection functors, destroy it. Take, for example, the following representation of $A_2$:
 \begin{equation*}
    \begin{tikzpicture}[>=triangle 45]
        \node (A) at (-1,0) {$\mathbb{K}^2$};
        \node (B) at (1,0) {$\mathbb{K}$.};
        \draw [semithick,->] (A) -- (B) node [midway,below] 
            {$\begin{bmatrix} 1 & 1 \end{bmatrix}$};
    \end{tikzpicture}
\end{equation*}
The reflection functor reverses the arrow, and takes this to the representation
\begin{equation*}
    \begin{tikzpicture}[>=triangle 45]
        \node (A) at (-1,0) {$\mathbb{K}^2$};
        \node (B) at (1,0) {$\mathbb{K}$.};
        \draw [semithick,->] (B) -- (A) node [midway,below] 
            {$\begin{bmatrix} -1 \\ 1 \end{bmatrix}$};
    \end{tikzpicture}
\end{equation*}
Clearly, this representation no longer satisfies our condition, as the corresponding matrix does not consist solely of $0s$ and $1s$. In this particular case, we are dealing with a representation of the $A_2$ quiver, and therefore there is a nice change of basis that makes it fit our condition (and, in fact, both of these representations decompose). However, this is no longer necessarily the case for more complicated quivers.

Geometrically, the reflection functor takes subspace embeddings to quotient maps, and vice versa. In this case, for example, the second is embedding the subspace $\mathbb{K}$ along the second diagonal in $\mathbb{K}^2$, while the first representation corresponded to $\mathbb{K}^2$ being projected onto the quotient space $\frac{\mathbb{K}^2}{\mathbb{K}}$. So we have a short exact sequence
\begin{equation*}
    \begin{tikzpicture}[>=triangle 45]
        \node (A) at (-2,0) {$\mathbb{K}$};
        \node (B) at (0,0) {$\mathbb{K}^2$};
        \node (C) at (2,0) {$\mathbb{K}$,};
        \draw [semithick,->] (A) -- (B) node [midway,below] 
            {$\begin{bmatrix} -1 \\ 1 \end{bmatrix}$};
        \draw [semithick,->] (B) -- (C) node [midway,below] 
            {$\begin{bmatrix} 1 & 1 \end{bmatrix}$};
    \end{tikzpicture}
\end{equation*}
and the composition of the two matrices gives the zero map.
	
	
	\subsection{Component and semi-component modules}\label{Sec:Component}
	
	We first define our main object of study motivated by the discussion at the beginning of the section.

	\begin{definition}\label{def:ComponentModule}
	    A $\mathbb{K}$ persistence module $M$ over a partially ordered set $P$ is called a \emph{component module} if there is a basis (called a \emph{component basis} of $M$) for every vector space $M(q)$, with $q \in P$,
	    such that any morphism $\alpha \colon q\to p$ in $P$ with respect to the basis is a linear map whose matrix contains only ones and zeros with exactly one $1$ in each column. If the matrices contain at most one $1$ in each column, it is called a \emph{semi-component module}.
	\end{definition}

    Interval modules are the component modules for which $M(q)$ has at most dimension one for any $q\in P$.
	For any multiparameter persistence module we may define its generators. Here we give a simple characterisation.

	\begin{definition}\label{def:ComponetGenerator}
	    An element of the component basis is called a generator if it is not in the image of any of the linear maps $M(\alpha)$ for $\alpha \colon p\to q$ in $P$.
	    A (semi-)component modules is said to have a minimal generator if, 
	    there is at least one generator in $M(p)$ and maps
		\begin{equation*}
		    \alpha \colon p \to q,
		\end{equation*}
		for any $q\in P$ such that $M(q)$ contains a generator.
	\end{definition}

	The set of generators can be chosen to interact well with component basis.

	\begin{lemma}\label{lem:ComponentGenerator}
		The elements of a (semi-)component basis of a (semi-)component module can be chosen so that each basis element is the image of one or more generators among all morphisms for which it is in the domain.  
	\end{lemma}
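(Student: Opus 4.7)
The plan is to show that the natural chase-back along morphisms of a given (semi-)component basis already exhibits every basis element as the image of a generator, so no actual modification of the basis is required; the statement then follows from the strong structural constraint of the $0$-$1$ matrix condition combined with the descending chain condition on $P$.

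First I would fix an arbitrary basis element $b\in M(q)$. If $b$ is itself a generator, then by Definition~\ref{def:ComponetGenerator} there is nothing to prove: $b$ is the image of the generator $b$ under $\mathrm{id}_q$. Otherwise, by the definition of a generator, $b$ lies in the image of some non-identity morphism $M(\alpha)\colon M(p)\to M(q)$ with $p<q$. Here I would invoke the (semi-)component structure crucially: the matrix of $M(\alpha)$ has entries in $\{0,1\}$ with at most one $1$ per column, so each basis element of $M(p)$ is sent either to $0$ or to a basis element of $M(q)$. Consequently, the image of $M(\alpha)$ is spanned set-theoretically by a subset of the chosen basis of $M(q)$, and $b$ lying in this image forces $b=M(\alpha)(b_p)$ for some basis element $b_p\in M(p)$.

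Next I would iterate this dichotomy on $b_p$. Either $b_p$ is a generator — in which case $b=M(\alpha)(b_p)$ already displays $b$ as the image of a generator — or $b_p=M(\beta)(b_{p'})$ for a basis element $b_{p'}\in M(p')$ with $p'<p$, and functoriality then gives $b=M(\alpha\circ\beta)(b_{p'})$. Repeating produces a strictly descending chain $q>p>p'>\cdots$ in $P$ together with a compatible sequence of basis elements and morphisms whose composite sends the last basis element to $b$.

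The main obstacle is guaranteeing that this descending chain actually terminates at a generator. This is where the finiteness of $P$ used throughout the paper (cf.\ Definition~\ref{PersistenceModule} and Theorem~\ref{thm:EncodingEmbedding}) becomes essential: since $P$ satisfies the descending chain condition, the iteration must halt after finitely many steps, and the final basis element reached cannot be in the image of any further non-identity morphism — hence it is a generator $g$. Composing all the intermediate morphisms then exhibits $b=M(\gamma)(g)$ for a suitable $\gamma\colon r\to q$, and because the argument used only that each column of each representing matrix has at most one $1$, it applies verbatim in both the component and semi-component cases. The phrase ``one or more'' in the statement simply reflects the fact that, when several distinct columns of different matrices map to the same row, several generators may trace to the same basis element, but the argument above ensures that at least one always does.
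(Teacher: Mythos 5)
Your proof is correct and is, in effect, a full spelling-out of what the paper leaves almost entirely implicit. The paper's own proof consists of a single sentence asserting that the claim ``follows up to changes of sign from the structure of the matrices,'' with a reference that appears to misattribute the relevant structure to Definition~\ref{def:ComponetGenerator} rather than Definition~\ref{def:ComponentModule}. Your chase-back argument supplies the missing content: fixing a basis element, observing that the $0$-$1$, one-$1$-per-column constraint forces any non-generator basis element to literally equal the image of a basis element one step down, and iterating until the descending chain condition halts the process at a generator. That is exactly the structural fact the authors were gesturing at.

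One genuine difference worth flagging: your argument shows that \emph{every} (semi-)component basis already has the stated property with no modification whatsoever, whereas the lemma is phrased as ``can be chosen'' and the paper's sketch invokes ``changes of sign.'' Sign changes would in general take a $0$-$1$ basis out of the class of component bases, so your stronger, modification-free conclusion is both cleaner and more in keeping with the definitions; the paper's mention of signs appears to be a red herring. You also correctly isolate the one hypothesis the lemma silently relies on --- termination of the descending chain --- and point out that finiteness of $P$ (or, more precisely, the descending chain condition or finite generation), which is in force throughout the paper, is what guarantees it. That caveat should arguably appear in the lemma statement itself, and raising it is a mild improvement over the paper's presentation rather than a gap in yours.
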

	
	\begin{proof}
		The lemma follows up to changes of sign from the structure of the matrices with respect to a component basis
		given in definition \ref{def:ComponetGenerator}.
	\end{proof}
    
    The next theorem demonstrates that component modules are the important modules for zero dimensional persistent homology.
	In particular, the component modules characterize exactly the modules obtainable as the zero-dimensional homology of filtered topological spaces.

	\begin{theorem}\label{thm:ComponentAlgebra}
	    The zero dimensional homology of a finite filtered CW-complex over partial order $P$ is a finitely generated component module over $P$.
		In addition, every finitely generated component module over $P$ is the zero-dimensional homology of a some finite filtered CW-complex over $P$.
	\end{theorem}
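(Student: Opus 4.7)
My plan is to prove the two directions separately; the forward direction follows quickly from the discussion at the start of Section 4, while the backward direction requires an explicit geometric construction. For the forward direction, given a finite filtered CW-complex $X$ over $P$, each $H_0(X(p))$ has the preferred basis $B(p):=\pi_0(X(p))$, with $H_0(X(p))\cong\mathbb{K}[B(p)]$. Any inclusion $X(p)\hookrightarrow X(q)$ sends each connected component of $X(p)$ into a unique connected component of $X(q)$, so the matrix of the induced map with respect to these bases has exactly one $1$ per column. Hence the zero-dimensional homology is a component module, and since $X$ is finite each $B(p)$ is finite, so the module is finitely generated.

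For the backward direction I would explicitly build a one-dimensional filtered CW-complex $X$ realising a given finitely generated component module $M$ with chosen component bases $B(p)$. Introduce a $0$-cell $v_{p,b}$ for every $p\in P$ and every $b\in B(p)$, and for every cover relation $(q,p)$ in $P$ and each $b\in B(q)$ attach a $1$-cell joining $v_{q,b}$ to $v_{p,M(\alpha)(b)}$, where $\alpha\colon q\to p$ is the unique morphism. Define $X(r)$ to be the subcomplex consisting of all $v_{p,b}$ with $p\leq r$ together with those $1$-cells both of whose endpoints are already included. Because $P$ is finite and each $M(p)$ is finite dimensional, $X$ is a finite CW-complex, and the $X(r)$ form a filtration by subcomplexes indexed by $P$.

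The main work is to show that $H_0$ of this filtered CW-complex is canonically isomorphic to $M$. Define a label map $\lambda_r(v_{p,b})=M(\alpha_{p,r})(b)\in B(r)$, where $\alpha_{p,r}$ denotes the unique morphism $p\to r$. Functoriality of $M$ implies that $\lambda_r$ is constant across any attached $1$-cell of $X(r)$, so it descends to a well defined map $\pi_0(X(r))\to B(r)$. For bijectivity it suffices to show that each fibre $\lambda_r^{-1}(b)$ is path connected in $X(r)$; given $v_{p,b'}$ in such a fibre I would pick a saturated chain of covers from $p$ up to $r$ and chase the corresponding edges to produce an edge path ending at $v_{r,b}$. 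Once this bijection $\pi_0(X(r))\cong B(r)$ is established, the map on $H_0$ induced by an inclusion $X(p)\hookrightarrow X(q)$ is by construction $M(\alpha_{p,q})$. The principal obstacle is the fibre-connectedness step, which relies on functoriality to turn a chain of covers into a valid edge path in $X(r)$.
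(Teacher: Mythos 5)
Your forward direction is essentially the same as the paper's: pick the path-component basis of each $H_0(X(q))$ and observe that an inclusion sends each component into a unique component, giving the one-$1$-per-column condition. Your backward direction, however, is a genuinely different construction. The paper builds $X(q)$ with one vertex per generator $g_i$ with $q_i\le q$ and then adds merging edges by scanning matrix columns; you instead introduce a vertex $v_{p,b}$ for \emph{every} pair $(p,b)$ with $b$ in the chosen basis of $M(p)$, adding $1$-cells only along cover relations and taking $X(r)$ to be the down-set subcomplex --- a mapping-telescope style realization. Both produce finite complexes under the (implicit, but needed) assumption that $P$ is finite; note that your vertex count $\sum_{p\le r}|B(p)|$ depends on this, whereas the paper's count is bounded by the number of generators alone. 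What your approach buys is a very clean verification: the label map $\lambda_r(v_{p,b})=M(\alpha_{p,r})(b)$ descends to $\pi_0(X(r))$ by functoriality, surjectivity is immediate from $v_{r,b}$, injectivity follows from your fibre-connectedness argument along a saturated chain of covers, and naturality of the resulting bijections $\pi_0(X(r))\cong B(r)$ identifies the induced maps on $H_0$ with the $M(\alpha)$ by construction. The paper's construction is more economical but its description of which edges to add, and at which stage, is stated less explicitly. Your proof is correct and is, if anything, tighter on the identification $H_0(X)\cong M$.
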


	\begin{proof}
		For a finite filtered CW-complex $X$ over a partially ordered set $P$, we may choose a basis of each $H_0(X(q);\mathbb{K})$ as the set of homology classes of points in each path component of $X(q)$ for each $q \in P$.
		These basis are each finite since $X$ is finite.  
		The morphisms of the persistence module $H_0(X;\mathbb{K})$ are induced by the the inclusions $X(\alpha)$ for $\alpha:q\to p$ a in $P$.
		By continuity the image of a path component of $X(q)$ under $X(\alpha)$ is contained in a single path component of $X(p)$,
		which implies the conditions of definition \ref{def:ComponentModule}.
		By construction the number, generators of $H_0(X;\mathbb{K})$ is bounded above by the number of path components in $X$, which can be no more than the number of zero simplicis and
		since each $X(q)$ is a finite CW-complex $H_0(X;\mathbb{K})$ is finitely generated.
		
		Given a finitely generated component module $M$ over a partially ordered set $P$ we may construct a filtered simplicial complex $X$ such that $M=H_0(X;k)$ as follows.
		For each generator $g_i \in M(q_i)$ of $M$, there is a corresponding vertex $x_i^q$ in each $X(q)$ for $q\in P$ with a morphism $\alpha\colon q_i \to q$.
		In particular by Lemma \ref{lem:ComponentGenerator}, each $x_i^q$ correspond to one or more $v_j^q$ in the component basis. 		
		For every morphism $\alpha \colon q \to p$ in $P$
		there is an edge between $x_i^q$ and $x_j^q$ when both have a $1$
		in the same row of the matrix corresponding to $\alpha$ with respect to the component basis.
		The edges can be chosen consistently because all morphism in $M$ commute.
		Finally, since there were only finitely many generators of $M$, by construction each $X(q)$ is a finite CW-complex.
	\end{proof}

    As a consequence of the characterisation of Theorem \ref{thm:ComponentAlgebra}, we can specify the subcategory of persistence modules of interest when restricting to zero dimensional homology. 
    
    \begin{definition}
        Over any a partial order $P$, define the category of \emph{zero-parameter modules} to be the full subcategory of persistence modules, 
        with objects the modules whose indecomposables occurs as indecomposables of some component module.
    \end{definition}
    
    Theorem \ref{thm:ComponentAlgebra} can be seen as following intuitively form properties of continuous maps between path components of topological spaces, however it immediately leads to the following surprising consequence on the types of decomposition that can arise form the $0$-dimensional persistent homology of filtered finite dimensional CW-complexes.
    This marks a significant simplification over the general case of persistence modules.

    \begin{corollary}
        Over any finite partial order set $P$,
        the number of zero-parameter modules of a given dimension vector is finite. 
    \end{corollary}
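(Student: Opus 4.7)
My plan is to bound the number of zero-parameter modules with a given dimension vector $\underline{d}$ by counting the broader class of semi-component modules. The first step is to observe that since $P$ is finite, the Hasse diagram has only finitely many cover relations. For each cover relation $\alpha \colon q \to p$ and each dimension vector $\underline{d}'$ with $\underline{d}' \leq \underline{d}$ componentwise (of which there are finitely many), a semi-component structure specifies the matrix of $M(\alpha)$ as a $d'_p \times d'_q$ matrix with $(0,1)$-entries having at most one $1$ per column; equivalently a partial function from the source basis to the target basis, of which there are at most $(d'_p+1)^{d'_q}$. Taking the product over the finitely many cover relations (subject to the compatibility constraints from commuting around diamonds in the Hasse diagram) gives a finite upper bound on the number of semi-component structures, and therefore on the number of isomorphism classes of semi-component modules with dimension vector at most $\underline{d}$.

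The key step is to show that every indecomposable zero-parameter module is isomorphic to a semi-component module. By definition of the zero-parameter subcategory, any indecomposable zero-parameter module $N$ arises as an indecomposable direct summand of some component module $C$. Fixing a component basis $\{B_q\}_{q \in P}$ of $C$ and writing $C = N \oplus N'$ with natural projections $\pi_q \colon C(q) \to N(q)$, the non-zero vectors of $\pi_q(B_q)$ span each $N(q)$. Using the fact that in the component basis every map $C(\alpha)$ sends basis elements to basis elements, a careful choice (typically involving rescaling selected representatives of $\pi_q(B_q)$) yields a basis of each $N(q)$ in which the induced map $N(\alpha)$ has $(0,1)$-entries with at most one $1$ per column; this realises $N$ as a semi-component module.

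The conclusion follows via Krull-Schmidt. Any zero-parameter module $M$ with dimension vector $\underline{d}$ decomposes uniquely up to isomorphism and reordering as $M = N_1 \oplus \cdots \oplus N_k$ with each $N_i$ indecomposable, the dimension vectors $\underline{d}_i$ of the summands sum to $\underline{d}$, and by the previous step each $N_i$ is a semi-component module with dimension vector $\underline{d}_i \leq \underline{d}$. There are only finitely many compositions $\underline{d} = \underline{d}_1 + \cdots + \underline{d}_k$ and, by the first step, finitely many isomorphism classes of indecomposable semi-component modules available for each $\underline{d}_i$, so the number of isomorphism classes of zero-parameter modules with dimension vector $\underline{d}$ is finite. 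The principal obstacle is the second step: one must exhibit a semi-component basis for $N$ not merely as a subset of $\pi_q(B_q)$ but as a suitably rescaled selection, and this is precisely where the combinatorial factor-through-\emph{Sets} structure of zero-dimensional persistence is needed to control the possible direct summands.
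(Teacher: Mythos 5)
Your key step---that every indecomposable zero-parameter module is isomorphic to a semi-component module---is false, and the paper itself supplies the counterexample. In Example~\ref{exam:SemiDecomposition} a semi-component module $S$ over the $4$-star poset decomposes as $S = N_1 \oplus N_2$, where both summands contain the matrix entry $\tfrac{1+\sqrt{5}}{2}$ and are explicitly identified there as non-semi-component (via the argument of Carlsson--Zomorodian cited in the example). Applying Theorem~\ref{thm:SemiExtension} to $S$ gives a component module $C = S \oplus I = N_1 \oplus N_2 \oplus I$, so $N_1$ and $N_2$ are indecomposable summands of a component module, i.e.\ indecomposable zero-parameter modules, yet they admit no basis making all their matrices $\{0,1\}$-valued. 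Your proposed ``careful choice of rescaled representatives of $\pi_q(B_q)$'' therefore does not exist in general: the image of a component basis under the projection onto a direct summand need not assemble into a semi-component basis, because the change-of-basis matrices realising the splitting can (and here must) introduce irrational entries. You correctly sensed that this step was the principal obstacle, but it is not a gap to be filled---the asserted fact is simply wrong, and the counting argument built on it collapses.

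The paper's proof avoids this issue by not passing through indecomposables or semi-component modules at all. It observes directly that a component module of a given dimension vector admits a component basis in which every cover-relation map is a $\{0,1\}$-matrix of fixed size; since $P$ is finite there are finitely many cover relations, hence finitely many such assignments of matrices, and hence finitely many isomorphism classes of component modules of that dimension vector. Theorem~\ref{thm:ComponentAlgebra} then identifies these with the modules the corollary is counting. Your step 1 (counting $\{0,1\}$-matrices) is in the right spirit, but should be applied to component modules directly rather than filtered through a decomposition into semi-component indecomposables.
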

    
    \begin{proof}
        By Theorem \ref{thm:ComponentAlgebra} the zero-dimensional persistent homology of a filtered finite simplicial complex is a component module.
        Every component module of a fixed dimension vector over $P$ has a basis on each vector space, for which the linear maps of each morphisums are matrices containing only zeros and ones.
        Therefore there are at most finitely many such modules of a given dimension vector.
    \end{proof}


\section{Decompositions of zero dimensional persistence modules}\label{sec:Decompositions}

	In this section we explore the indecomposables and decompositions of persistence modules corresponding to the zero-dimensional homology of filtered topological spaces.

	
	\subsection{An initial interval decomposition}\label{sec:IntervalDecomposition}
	
	Not all non-interval component modules allow interval decomposition, and those that do, do not necessarily decompose into interval modules as demonstrated by the following examples.  
	
	\begin{example}\label{exam:IncomparableComponent}
	    Consider the persistence module   
	    \begin{equation*}\label{eq:Counter1}
    	    \begin{tikzpicture}[>=triangle 45]
                \node (A) at (0,0) {$\mathbb{K}$};
                \node (B) at (0,2) {$\mathbb{K}$};
                \node (C) at (2,1) {$\mathbb{K}^2$};
                \node (D) at (4,1) {$\mathbb{K}$,};
                \draw [semithick,->] (A) -- (C) node [midway,below] 
                {$\begin{bmatrix} 0 \\ 1 \end{bmatrix}$};
                \draw [semithick,->] (B) -- (C) node [midway,above]
                {$\begin{bmatrix} 1 \\ 0 \end{bmatrix}$};
                \draw [semithick,->] (C) -- (D) node [midway,above]
                {$\begin{bmatrix} 1 & 1 \end{bmatrix}$};
            \end{tikzpicture}
	    \end{equation*}
	    where the matrices represent the linear maps with respect to some component basis.
	    The module is not an interval module as the central vector space is $2$-dimensional. 
	    Moreover, it is straightforward to show that the module is indecomposable using Lemma \ref{lem:DecompositionCharectersiation}.
	    Notice also that there is no minimal generator since
	    the two leftmost vector spaces lie in incomparable minimal positions of the partial order.
	    \end{example}
	    
	    \begin{example}\label{exam:ComponentDecompositions}
	    Consider the component module with component basis,  
	    \begin{equation*}
    	    \begin{tikzpicture}[>=triangle 45]
    	        \node (X) at (0,1) {$\mathbb{K}$};
                \node (A) at (3,0) {$\mathbb{K}^2$};
                \node (B) at (3,2) {$\mathbb{K}^2$};
                \node (C) at (6,1) {$\mathbb{K}^3$};
                \node (D) at (9,1) {$\mathbb{K}^2$};
                \node (Z) at (12,1) {$\mathbb{K}$.};
                \draw [semithick,->] (X) -- (A) node [midway,below] 
                {$\begin{bmatrix} 0 \\ 1 \end{bmatrix}$};
                \draw [semithick,->] (X) -- (B) node [midway,above]
                {$\begin{bmatrix} 1 \\ 0 \end{bmatrix}$};
                \draw [semithick,->] (A) -- (C) node [midway,below] 
                {$\begin{bmatrix} 0 & 1 \\ 0 & 0 \\ 1 & 0 \end{bmatrix}$};
                \draw [semithick,->] (B) -- (C) node [midway,above]
                {$\begin{bmatrix} 1 & 0 \\ 0 & 1 \\ 0 & 0 \end{bmatrix}$};
                \draw [semithick,->] (C) -- (D) node [midway,above]
                {$\begin{bmatrix} 1 & 0 & 0 \\ 0 & 1 & 1 \end{bmatrix}$};
                \draw [semithick,->] (D) -- (Z) node [midway,above]
                {$\begin{bmatrix} 1 & 1 \end{bmatrix}$};
            \end{tikzpicture}
	    \end{equation*}
	    We may construct a decomposition of this module using using Lemma \ref{lem:DecompositionCharectersiation}
	    with idempotent endomorphism,
	    \begin{equation*}
	        \begin{tikzpicture}[>=triangle 45]
    	        \node (X) at (0,1) {$\begin{bmatrix} 0 \end{bmatrix}$};
                \node (A) at (3,0) {$\begin{bmatrix} 1 & 0 \\ -1 & 0 \end{bmatrix}$};
                \node (B) at (3,2) {$\begin{bmatrix} 0 & -1 \\ 0 & 1 \end{bmatrix}$};
                \node (C) at (6,1) {$\begin{bmatrix} -1 & -1 \\ 1 & 0 \\ 0 & 1 \end{bmatrix}$};
                \node (D) at (9,1) {$\begin{bmatrix} 0 & -1 \\ 0 & 1 \end{bmatrix}$};
                \node (Z) at (12,1) {$\begin{bmatrix} 0 \end{bmatrix}.$};
                \draw [semithick,->] (X) -- (A) node [midway,below]{};
                \draw [semithick,->] (X) -- (B) node [midway,above]{};
                \draw [semithick,->] (A) -- (C) node [midway,below]{};
                \draw [semithick,->] (B) -- (C) node [midway,above]{};
                \draw [semithick,->] (C) -- (D) node [midway,above]{};
                \draw [semithick,->] (D) -- (Z) node [midway,above]{};
            \end{tikzpicture}
	    \end{equation*}
	    This yields a decomposition into kernel and image modules
	    \begin{equation*}
    	    \begin{tikzpicture}[>=triangle 45]
    	        \node (X) at (0,1) {$\mathbb{K}$};
                \node (A) at (3,0) {$\mathbb{K}$};
                \node (B) at (3,2) {$\mathbb{K}$};
                \node (C) at (6,1) {$\mathbb{K}$};
                \node (D) at (9,1) {$\mathbb{K}$};
                \node (Z) at (12,1) {$\mathbb{K}$};
                \draw [semithick,->] (X) -- (A) node [midway,below] 
                {$\begin{bmatrix} 1 \end{bmatrix}$};
                \draw [semithick,->] (X) -- (B) node [midway,above]
                {$\begin{bmatrix} 1 \end{bmatrix}$};
                \draw [semithick,->] (A) -- (C) node [midway,below] 
                {$\begin{bmatrix} 1 \end{bmatrix}$};
                \draw [semithick,->] (B) -- (C) node [midway,above]
                {$\begin{bmatrix} 1 \end{bmatrix}$};
                \draw [semithick,->] (C) -- (D) node [midway,above]
                {$\begin{bmatrix} 1 \end{bmatrix}$};
                \draw [semithick,->] (D) -- (Z) node [midway,above]
                {$\begin{bmatrix} 1 \end{bmatrix}$};
            \end{tikzpicture}
	    \end{equation*}
	    and
	    \begin{equation*}
    	    \begin{tikzpicture}[>=triangle 45]
    	        \node (X) at (0,1) {$0$};
                \node (A) at (3,0) {$\mathbb{K}$};
                \node (B) at (3,2) {$\mathbb{K}$};
                \node (C) at (6,1) {$\mathbb{K}^2$};
                \node (D) at (9,1) {$\mathbb{K}$};
                \node (Z) at (12,1) {$0$.};
                \draw [semithick,->] (X) -- (A) node [midway,below] 
                {$\begin{bmatrix} 0 \end{bmatrix}$};
                \draw [semithick,->] (X) -- (B) node [midway,above]
                {$\begin{bmatrix} 0 \end{bmatrix}$};
                \draw [semithick,->] (A) -- (C) node [midway,below] 
                {$\begin{bmatrix} 0 \\ 1 \end{bmatrix}$};
                \draw [semithick,->] (B) -- (C) node [midway,above]
                {$\begin{bmatrix} 1 \\ 0 \end{bmatrix}$};
                \draw [semithick,->] (C) -- (D) node [midway,above]
                {$\begin{bmatrix} 1 & 1 \end{bmatrix}$};
                \draw [semithick,->] (D) -- (Z) node [midway,above]
                {$\begin{bmatrix} 0 \end{bmatrix}$};
            \end{tikzpicture}
	    \end{equation*}
	    Both of these are indecomposable component modules, only the first of which is an interval module. The second module can be shown to be indecomposable in the same way as Example \ref{exam:ComponentDecompositions}. 
	\end{example}

    We will see that if a non-interval component module has a minimal generator, it always decomposes.
    The following theorem specialises in the one parameter case  to the remark that zero-dimensional persistent homology decomposes with a unique longest interval, providing the filtration spaces are eventually connected.

	\begin{theorem}\label{thm:ComponentSplit}
		Let $C$ be a finite $\mathbb{K}$ component module over a partially ordered set $P$ with at least two generators, at least one of which is minimal.
		Then $C$ decomposes as a direct sum of an interval and a non-trivial semi-component module over $P$.
	\end{theorem}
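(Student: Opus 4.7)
The plan is to use the minimal generator $g \in M(p)$ to build an interval summand $I$ and an explicit retraction $r \colon C \to I$, so that $J = \ker r$ will be the desired semi-component complement. The first step is to observe that if $M(q) \neq 0$ then $p \leq q$: by Lemma \ref{lem:ComponentGenerator}, every basis element of $M(q)$ is the image under some structure map of a generator at a position $q' \leq q$, and by the minimality of $g$ that position satisfies $p \leq q'$, hence $p \leq q$. It follows that the image $v_q$ of $g$ under $p \to q$ is a distinguished basis element whenever $M(q) \neq 0$, and the assignment $I(q) = \mathbb{K} v_q$ (and $I(q) = 0$ elsewhere) defines a sub-persistence-module which I would verify satisfies all three conditions of Definition \ref{def:IntervalComponent}.

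Next I would construct the retraction using the orbit of $g$. Let $G \subseteq \bigsqcup_q B(q)$ denote the equivalence class of $g$ under the relation generated by the structure maps of $C$ acting on component basis elements, and set $c(b) = 1$ if $b \in G$ and $c(b) = 0$ otherwise. Define $r_q(b) = c(b) v_q$ on each component basis; the naturality condition $r_r \circ M(\alpha) = M(\alpha)|_I \circ r_q$ reduces to $c(M(\alpha)(b)) = c(b)$, which holds because $G$ is closed under structure maps forwards and backwards by construction. Since $r_q(v_q) = v_q$, the natural transformation $r$ is a retraction, so $C = I \oplus J$ with $J = \ker r$.

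The last step is to check that $J$ is a non-trivial semi-component module. Non-triviality holds because any second generator $g' \in M(p')$ satisfies $g' \neq v_{p'}$ (otherwise $g'$ would lie in the image of $g$, contradicting its being a generator), so $g' - c(g') v_{p'}$ is a nonzero element of $J(p')$. For the semi-component structure I would take the basis $\tilde{w}_i^q = w_i^q - c(w_i^q) v_q$ of $J(q)$, where $w_i^q$ ranges over the component basis of $M(q)$ distinct from $v_q$, and run a short case analysis on $M(\alpha)(w_i^q)$: if the image is $v_r$ or another element of $G$ then the $v$-parts cancel and $\tilde{w}_i^q$ maps to $0$ or to a single $\tilde{w}_j^r$; if the image lies outside $G$ then $w_i^q$ must also lie outside $G$, so no correction is needed and again $\tilde{w}_i^q$ maps to a single $\tilde{w}_j^r$. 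In every case the column of the matrix has at most one $1$, which is exactly the semi-component condition. The delicate point, and the main thing to get right, is that one fixed complement at each node must be compatible with every outgoing structure map simultaneously; this is why the retraction is built from the globally defined orbit $G$ rather than via node-by-node choices, and it is also why one can only hope for a semi-component (allowing zero columns) rather than a full component structure on $J$.
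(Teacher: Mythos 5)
Your proof is correct, and it arrives at the decomposition by a genuinely different route than the paper's. The paper builds an idempotent endomorphism $f$ directly on the generators, setting $f_{q_i}(g_i) = g_i - M(\alpha_i)(g_1)$ for $i \neq 1$ and $f_{q_1}(g_1)=0$, then must verify that this prescription extends consistently across the poset via a case analysis on which generators hit each basis element. Your construction instead defines the interval summand $I$ first and builds a retraction $r$ onto it via the forward-and-backward orbit $G$ of the minimal generator; naturality of $r$ is then automatic from the closure of $G$ under the structure maps, with no consistency check required. Interestingly, the two idempotents are not complementary in general: the paper's complement $\operatorname{im}(f)$ has basis $w_i^q - v_q$ for all $w_i^q \neq v_q$, while your $\ker(r)$ has basis $w_i^q - c(w_i^q)v_q$, so the semi-component summands differ as submodules of $C$ (they agree precisely on basis elements lying in $G$), though they are isomorphic by Krull--Remak--Schmidt and both are semi-component by essentially the same matrix computation. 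What you lose is a slightly less uniform basis (the correction term is applied conditionally); what you gain is that the splitting is manifestly well-defined and the retraction depends only on the orbit of the chosen minimal generator, not on a choice of the remaining generators. The non-triviality argument and the verification that $I$ satisfies Definition~\ref{def:IntervalComponent} are both handled cleanly, including the preliminary observation that $M(q)\neq 0$ forces $p\leq q$, which the paper leaves implicit.
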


	\begin{proof}
		By Lemma \ref{lem:ComponentGenerator}
		we can take generators 
		\begin{equation*}
			g_1,\dots, g_n \text{ in } C(q_1),\dots,C(q_n),
		\end{equation*}
		with $g_i\in M(q_i)$ and component basis
		\begin{equation*}
			v_1^q,\dots ,v_{\dim(C(q))}^q\in C(q),
		\end{equation*}
		such that for each $p\in P$,
		any $P$ morphism $\alpha\colon q_i \to p$ has $C(\alpha)(g_i)=v_j^p$ for each $i=1,\dots,n$ and $j=1,\dots, \dim(C(p))$.
		Let $d\in \mathbb{N}$ be the maximal dimension of $C(p)$ over all $p\in P$.
		
		If $d=0$ then $M(p)=0$ for all $p\in P$, so $C$ is the trivial module.
		If $d=1$ then $M$ is an interval module.
		Therefore, assume now that $d\geq 2$ and as a consequence $n \geq 2$ also.
		We construct a natural endomorphism $f$ on $C$ that is neither an isomorphism nor nilpotent.
		Since $C$ has a minimal generator, without loss of generality we may assume that $g_1$ is a minimal generator.
		So, for each $j=1,\dots,n$ there  are morphisms
		\begin{equation*}
			\alpha_j \colon q_1 \to q_j
		\end{equation*}
		in $P$.
		Using the commutative condition, a natural endomorphism $f$ of $C$ will be determined uniquely by $f_{q_i}(g_i)$ for $i= 1,\dots,n$.
		Define $f_{q_i}$ on $g_i$ by   		
		\begin{equation}\label{eq:fatq}
			f_{q_i}(g_i) =
			\begin{cases}
    				g_i-M(\alpha_i)(g_1)		&		\text{if } i \neq 1 \\
    				0							&		\text{if } i=1.
    			\end{cases}
		\end{equation}
		We now check that this extends to a natural endomorphism $f$ on $C$.
		For any $p\in P$ and $v_k^p$ for $k=1,\dots,\dim(C(p)))$, let $\{ i_1,\dots,i_l \}$ be the subset of $\{1,\dots,n\}$ such that
		there exists a $\beta_{i_j} \colon q_{i_j} \to p$ with
		\begin{equation*}
			\beta_{i_j}(g_{i_j})=v_k^p.
		\end{equation*}
		For each $i=1,\dots,n$ since $C$ is a component module if $\beta_{i}$ exists, $\beta_{i}(g_{i})$ cannot be zero and must by construction be an elements of the component basis.
		The endomorphism $f$ exists if and only if for all $i_j$, each
		\begin{equation*}
			(\beta_{i_j}\circ f_{q_{i_j}} )(g_{i_j})
		\end{equation*}
		agree, in which case $f_{p}(v^p_{k})$ is the image.
		This is indeed the case because if for some $j$, $i_j=1$ then all the $(\beta_{i_j}\circ f_{q_{i_j}} )(g_{i_j})$ are $0$,
		otherwise $(\beta_{i_j}\circ f_{q_{i_j}} )(g_{i_j})$ are all equal to $v^p_{k}-C(\alpha_1)(g_1)$. 
		Since $n \geq 2$, by construction (\ref{eq:fatq}) $f$ is not an isomorphism and not nilpotent.
		
		In fact $f$ is an idempotent, meaning $f^2=f$.
		To see this notice that for each $i=1,\dots,n$, the linear map
		$f_{q_i}$ given in equation (\ref{eq:fatq}) is an idempotent, since for $i\neq 1$ we have $f_{q_i}(C(\alpha_i)(g_1))=0$ by naturality of $f$, so
		\begin{equation*}
		    f_{q_i}^2(g_i)=f_{q_i}(g_i-C(\alpha_i)(g_1))=f_{q_i}(g_i)
		\end{equation*}
		and $f_{q_1}^2=0$ otherwise.
		By naturality $f_q$ is determined by $f_{q_i}$ for any other $q\in P$ and so $f$ is an idempotent.
		
		Hence by Lemma \ref{lem:DecompositionCharectersiation} since $f$ is an idempotent, $C$ can be decomposed as the direct sum of the kernel of $f$
		and the image of $f$.
		The kernel of $f$ is the sub module generated by the single generator $g_1$, hence is an interval module.
		The image of $f$ is generated by 
		\begin{equation*}
			g_i-g_1
		\end{equation*}
		for $i=2,\dots,n$.
		Since $C$ is a component module the restriction of the matrices with respect to the component basis on these generators is a
		a semi-component module.
		Therefore, the image of $f$ is a semi-component module with $n-1$ generators.
	\end{proof}
	
	The theorem does not apply to semi-component modules essentially because connected components in a filtered topological space can only merge and cannot disappear.
	This property is not shared by semi-component modules. 
    
    
    \subsection{Semi-component extension}\label{sec:Extensions}
	
	In this section we show that all semi-component modules may be obtained as the decomposition of a component module. In particular, we observe how this decomposition relates to the one derived in the previous section. 
	We first demonstrate the key ideas in an example. 
	
	\begin{example}\label{exam:SemiExtension}
        The component module on the right can be seen to decompose into the sum of the semi-component module shown on the left together with  an interval module given by the entire underlying poset. 
        This is done by straightforward linear transformations to diagonalise each matrix.
	    \begin{equation*}
    	    \begin{tikzpicture}[>=triangle 45]
    	        \node (X) at (0,2.5) {$\mathbb{K}^2$};
                \node (A) at (2.5,-0.3) {$\mathbb{K}$};
                \node (B) at (2.5,2.5) {$\mathbb{K}$};
                \node (C) at (2.5,5.3) {$\mathbb{K}$};
                \node (Y) at (5,2.5) {$0$};
                \node (X') at (8,2.5) {$\mathbb{K}^3$};
                \node (A') at (11.5,-0.3) {$\mathbb{K}^2$};
                \node (B') at (11.5,2.5) {$\mathbb{K}^2$};
                \node (C') at (11.5,5.3) {$\mathbb{K}^2$};
                \node (Y') at (14,2.5) {$\mathbb{K}$};
                \draw [semithick,->] (X) -- (A) node [midway,below] 
                {$\begin{bmatrix} 1 & 1 \end{bmatrix} \;\;\;\;\;\;\;\;$};
                \draw [semithick,->] (X) -- (B) node [midway,above]
                {$\begin{bmatrix} 1 & 0 \end{bmatrix}$};
                \draw [semithick,->] (X) -- (C) node [midway,above] 
                {$\begin{bmatrix} 0 & 1 \end{bmatrix} \;\;\;\;\;\;\;\;$};
                \draw [semithick,->] (A) -- (Y) node [midway,below]
                {\;\;\;\; $\begin{bmatrix} 0 \end{bmatrix}$};
                \draw [semithick,->] (B) -- (Y) node [midway,above]
                {$\begin{bmatrix} 0 \end{bmatrix}$};
                \draw [semithick,->] (C) -- (Y) node [midway,above]
                {$\;\;\;\;\;\;\;\; \begin{bmatrix} 0 \end{bmatrix}$};
                \draw [semithick,->] (X') -- (A') node [midway,below] 
                {$\begin{bmatrix} 1 & 0 & 0 \\ 0 & 1 & 1 \end{bmatrix} \;\;\;\;\;\;\;\;\;\;\;\;$};
                \draw [semithick,->] (X') -- (B') node [midway,above]
                {$\;\;\;\;\;\;\;\; \begin{bmatrix} 1 & 0 & 1 \\ 0 & 1 & 0 \end{bmatrix}$};
                \draw [semithick,->] (X') -- (C') node [midway,above] 
                {$\begin{bmatrix} 1 & 1 & 0 \\ 0 & 0 & 1 \end{bmatrix} \;\;\;\;\;\;\;\;\;\;\;\;$};
                \draw [semithick,->] (A') -- (Y') node [midway,below]
                {$\;\;\;\;\;\;\;\; \begin{bmatrix} 1 & 1 \end{bmatrix}$};
                \draw [semithick,->] (B') -- (Y') node [midway,above]
                {$\begin{bmatrix} 1 & 1 \end{bmatrix}$};
                \draw [semithick,->] (C') -- (Y') node [midway,above]
                {$\;\;\;\;\;\;\;\; \begin{bmatrix} 1 & 1 \end{bmatrix}$};
            \end{tikzpicture}
	    \end{equation*}
	\end{example}
	
	We  can generalise the construction of this example on semi-component modules to the following method. 

    \begin{construction}\label{def:SemiExtension}
        Take a semi-component module $S$ and a choice of semi-component basis.
        Since with respect to this basis every linear map is represented by a matrix with at most one $1$ in each column we may obtain form $S$ a component module $C$ as follows.
        \begin{itemize}
            \item 
            Add an extra dimension to every vector space in $S$.
            \item
            Add to each matrix in $S$ a new first column and first row.
            \item
            The new first column contains a $1$ as its first entry and zeros otherwise.
            \item
            After the first entry, the new first row has entries $0$ if the column already contains a $1$ and is $1$ otherwise.
        \end{itemize}
        We call $C$ the \emph{component extension} of $S$.
    \end{construction}

    We will see that a component module resulting from a semi-component extension is the direct sum of the extended semi-component module and an interval module.
    In particular all semi-component modules can be obtained from component modules by decomposition. First we make the following observations.

    By choosing a basis of the vector space of each vertex of a quiver $Q$, the entries of the matrices associated to the arrows give a point in the \emph{representation space} of quiver representations of a given dimension vector.
    The action of an elementary matrix operations at a vertex on the representation space is easily determined.
	
	\begin{remark}\label{rmk:SymMarixDiag}
	    Given a quiver representation, we may choose a basis for each $\mathbb{K}$-vector space at a node, realising the linear map associated to each arrow between nodes as a matrix of the appropriate dimensions.
	    A decomposition of a quiver representation corresponds to a choice of basis at each node such that the matrices of each arrow are simultaneously block matrices of the same type. 
	    A change of basis at a node corresponds to a square matrix $B$.
	    The the change of basis at this node effects each matrix $T$ of an arrow into the node, by replacing it with $TB$ and each matrix $S$ of an arrow out of the node, by replacing it with $B^{-1}S$.
	    In particular:
	    \begin{enumerate}
	    \item
	        Performing an elementary row operation on the matrix of some arrow, simultaneously performs the reverse of the elementary column operation of the same type on all matrices of arrows with the same target.
	    \item
	        Performing an elementary column operation on the matrix of some arrow,
	        simultaneously performs the reverse of the elementary row operation of the same type on all matrices of arrows with the same source.
	    \end{enumerate}
	\end{remark}

    To obtain our theorem on semi-component extension we first prove the following lemma
	and to this end we give some additional terminology.
	For each $n\geq 1$ we denote by $[n]$, the set $\{0,1,2,\dots,n \}$ with the usual total order.
	
	\begin{definition}\label{def:Grading}
	    Given a finite partially ordered set $P$, a map $f \colon P \to [h(P)-1]$ is called a \emph{pre-grading} of $P$ if $f$ is a poset map with the additional property that if $p,q \in P$ when $p < q$ then $f(p) < f(q)$.
	\end{definition}
	
	Usually $P$ is called a \emph{graded} poset if there is a map $f \colon P \to \mathbb{N}$,
	that in addition to the property above also satisfies,
	$f(p)=f(q)+1$ whenever $p$ covers $q$. 
	Not all posets have a grading.
	For example a bounded poset has a grading only when all maximal chains have the same size. 
	
	\begin{lemma}\label{lem:Grading}
	    Any partially ordered set with finite height has a pre-grading.
	    In particular the preimage of each element of the pre-grading function will be an antichain.
	\end{lemma}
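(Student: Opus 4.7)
The plan is to define $f \colon P \to [h(P)-1]$ by sending each $p \in P$ to one less than the length of the longest chain in $P$ with top element $p$. Since $P$ has finite height $h(P)$, for every $p \in P$ the set of lengths of chains ending at $p$ is a nonempty (it contains $1$, namely the singleton $\{p\}$) subset of $\{1, 2, \dots, h(P)\}$, so it attains a maximum. Hence $f(p) \in \{0, 1, \dots, h(P)-1\}$ and $f$ lands in the required codomain.

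Next I would verify the strict monotonicity condition. Suppose $p < q$ in $P$ and fix a chain $p_1 < p_2 < \cdots < p_k = p$ witnessing $f(p) = k-1$. Appending $q$ yields a chain of length $k+1$ with top element $q$, so $f(q) \geq k = f(p) + 1 > f(p)$. This simultaneously shows that $f$ is a poset map, since $p \leq q$ either means $p = q$ (in which case $f(p) = f(q)$) or $p < q$ (in which case $f(p) < f(q)$ by the above).

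For the antichain property, suppose $f(p) = f(q)$ for some $p \neq q$ in $P$. If $p$ and $q$ were comparable, then without loss of generality $p < q$, and the previous step would force $f(p) < f(q)$, a contradiction. Consequently every two distinct elements sharing a common value of $f$ are incomparable, so $f^{-1}(k)$ is an antichain for each $k \in [h(P)-1]$.

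The only point requiring care is the existence of the maximum that defines $f(p)$ without assuming $P$ is finite; the key observation is that finiteness of $h(P)$ alone bounds all chain lengths in $P$ by $h(P)$, so the supremum is attained in the finite set $\{1, \dots, h(P)\}$. Beyond this, the argument is essentially a direct verification from the definitions and should not present any obstacle.
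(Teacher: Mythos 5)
Your proof is correct, and it takes a genuinely different route from the paper's. The paper builds the pre-grading inductively, level by level: it sets $f^{-1}(0)$ to be the minimal elements, then at each step $k$ collects the elements covering something at level $k-1$, discards those dominated by another such element to get an antichain, and declares that antichain to be $f^{-1}(k)$. You instead give a closed formula, defining $f(p)$ directly as one less than the length of the longest chain with top element $p$, and verify strict monotonicity by the chain-extension argument. Your version is shorter, avoids having to argue that the inductive process assigns a value to every element and terminates, and the antichain claim falls out immediately from strict monotonicity rather than being engineered in by the $S'$ step. It is also slightly more faithful to the lemma as stated: the lemma asserts the result for \emph{any} poset of finite height, whereas the paper's proof opens by assuming $P$ is finite; your argument needs only that chain lengths are bounded by $h(P)$, so it covers the infinite case as well. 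One small point worth making explicit is that your $f$ genuinely achieves the value $h(P)-1$ somewhere (take $p$ to be the top of a chain of maximal length), so the codomain $[h(P)-1]$ is tight; this is implicit but worth a line.
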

	
	\begin{proof}
	    Given a finite partial order $P$, we can construct a pre-grading function $f$ by induction on $[h(P)-1]$ as follows.
	    Define the preimage of $f$ at $0$ as all minimal elements of $P$.
	    Minimal elements form an antichain as they are incomparable by definition,
	    making $f$ well defined on these elements of $P$.
	    
	    Assume inductively that the preimage of $f$ has been defined on all elements with preimage less than $k\in [h(P)-1]$.
	    Let $S\subseteq P$ be all elements which cover $f^{-1}(k-1)$.
	    Take $S'\subseteq S$ to be the subset such that for any $p,q\in S$, if $p \leq q$ then $q \notin S'$.
	    Hence $S'$ is an antichain and we may choose the preimage of $f$ at $k$ to be $S'$.
	    By construction of $f$ all elements of $P$ are assigned a value by $f$ equal to one less than the length of the smallest maximal chain in which they and a minimal element of $P$ are contained,
	    therefore $f$ is a well defined pre-grading. 
	    The inductive construction will terminate at $h(P)-1$ since $f$ is uniquely determined on each maximal chain. 
	\end{proof}

    \begin{theorem}\label{thm:SemiExtension}
        Any semi-component module $S$ over a finite partial order $P$ arises as a summand of a component module $C$ in the following way
        \begin{equation*}
            C = S \oplus I,
        \end{equation*}
        where $I$ is an interval module over $P$.
    \end{theorem}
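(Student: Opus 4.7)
The plan is to apply Lemma \ref{lem:DecompositionCharectersiation} to the component extension $C$ of $S$ given by Construction \ref{def:SemiExtension}, exhibiting an idempotent natural endomorphism $f \colon C \to C$ whose image is an interval module on all of $P$ and whose kernel is isomorphic to $S$. As a preliminary step I would first confirm that $C$ is a well-defined component module (every column in each extended matrix carries exactly one $1$, either inherited from $S$ with a zero in the first row, or freshly placed in the first row when the original column was zero) and that composition commutes in $C$ whenever it does in $S$, via a direct block computation using the shape $\bigl(\begin{smallmatrix} 1 & r^T \\ 0 & M_S \end{smallmatrix}\bigr)$.

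Writing $v_0^q, v_1^q, \dots, v_{d_q}^q$ for the component basis of $C(q)$ with $v_0^q$ the newly adjoined generator, I would define $f$ at each vertex by $f_q(v_0^q) = v_0^q$ and $f_q(v_j^q) = a_j^q\, v_0^q$ for $j \geq 1$, where $a_j^q \in \{0,1\}$ is set to $1$ exactly when there exists some $\alpha \colon q \to p$ in $P$ with $S(\alpha)(v_j^q) = 0$ (the vector $v_j^q$ is \emph{eventually dead} in $S$) and to $0$ otherwise. Idempotency is visible since the image of $f_q$ lies in $\mathbb{K}\cdot v_0^q$ on which $f_q$ acts as the identity. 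For naturality, each morphism $\alpha \colon q \to p$ reduces the condition to the scalar identity $a_j^q = r_j^\alpha + \sum_k (M_S^\alpha)_{kj}\, a_k^p$, where $r_j^\alpha$ is the new first-row entry. This splits into two cases according to whether the $j$-th column of $M_S^\alpha$ is zero or has its unique $1$ in some row $k^\ast$, and in either case the ``eventually dead'' classification supplies the equality, using commutativity of $S$ to push eventual death through compositions.

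Lemma \ref{lem:DecompositionCharectersiation} then yields $C = \ker(f) \oplus \operatorname{im}(f)$; the image is the one-dimensional line spanned by $v_0^q$ at each vertex with identity transition maps (the first column of every extended matrix being $e_1$), so $\operatorname{im}(f)$ equals the interval module $I$ on the whole of $P$. I would complete the proof by showing $\phi_q(v_j^q) := v_j^q - a_j^q v_0^q$ defines an isomorphism $\phi \colon S \to \ker(f)$ of persistence modules, whose naturality is exactly the same scalar identity already used for $f$ and whose bijectivity is immediate on dimension grounds. The main obstacle is the global consistency of the scalars $a_j^q$, so that a single endomorphism is natural at every morphism of $P$ simultaneously; the binary ``eventually dead/immortal'' classification is forced by the semi-component condition that each column has at most one $1$, and it is consistent precisely because commutativity in $S$ propagates the vanishing of $v_j^q$ through any intermediate morphism whose image is likewise killed later.
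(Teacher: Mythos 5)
The idempotent-endomorphism strategy you adopt is a legitimate (and arguably cleaner) alternative to the paper's argument, which instead sweeps through the poset by a pre-grading (Lemma~\ref{lem:Grading}) performing simultaneous elementary row and column operations on the cover-relation matrices. But your specific choice of scalars $a_j^q$ is not natural on general finite posets, so as written there is a gap.

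The problem is the ``$\Rightarrow$'' direction in Case 2. Naturality at $\alpha\colon q\to p$ with $S(\alpha)(v_j^q)=v_{k^\ast}^p$ forces $a_j^q=a_{k^\ast}^p$. Eventual death pushes \emph{forward}: if $v_{k^\ast}^p$ dies downstream, so does $v_j^q$, by composition. But it does not push the other way when the witness to $v_j^q$'s death lies on an incomparable branch. Concretely, take $P=\{q<p,\;q<m\}$ with $p$ and $m$ incomparable, and set $S(q)=S(p)=\mathbb{K}$, $S(m)=0$, with $S(q\to p)=\begin{bmatrix}1\end{bmatrix}$ and $S(q\to m)=0$. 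Then $v_1^q$ dies at $m$, so $a_1^q=1$, while $v_1^p$ never dies, so your rule gives $a_1^p=0$; yet $S(q\to p)$ maps $v_1^q$ to $v_1^p$ and $r_1^{q\to p}=0$, so naturality demands $a_1^q=a_1^p$, a contradiction. Your appeal to ``commutativity of $S$'' to propagate eventual death would suffice if $P$ were totally ordered (or an upper semilattice), but is exactly what fails here.

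The good news is that the approach is salvageable by \emph{not} attempting to detect death at all: simply set $a_j^q=1$ for every $j\ge 1$ and every $q$, i.e.\ define $f_q$ to send each basis vector of $C(q)$ to $v_0^q$. This is manifestly idempotent, and naturality collapses to the one-line identity $r^T+\mathbf{1}^T M_S^\alpha=\mathbf{1}^T$: each column of $M_S^\alpha$ is either zero (then $r_j=1$ and $(\mathbf{1}^T M_S^\alpha)_j=0$) or has exactly one $1$ (then $r_j=0$ and $(\mathbf{1}^T M_S^\alpha)_j=1$), so each entry of the left side is $1$. The image is the full interval module and $\phi_q(v_j^q)=v_j^q-v_0^q$ gives the isomorphism $S\cong\ker f$ exactly as you describe. (Tracing the paper's row/column sweep back to the original basis produces precisely this idempotent, so the two arguments agree; the idempotent formulation just avoids the pre-grading bookkeeping.) You should also be careful to state and use the paper's hypothesis that $P$ is finite and that $S$ is a persistence module, so that $C$ built from the component extension genuinely commutes.
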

    
    \begin{proof}
        Given a semi-component module with a component basis, take its component extension with respect to this basis, giving us a component module and corresponding component basis.
        We now outline a procedure to decompose this component basis through elementary row and column operations on the corresponding matrices of its linear maps representing cover relations in the underlying partial order.
        
        Take some pre-grading of the partial order as defined in Definition \ref{def:Grading}, which exists by Lemma \ref{lem:Grading}.
        Beginning from cover relation matrices whose source is of pre-grade $0$ in the partial order, we may perform column operations subtracting the first column from every other column.
        Since these matrices must represent linear maps whose domain is a minimal element of the partial order, by Remark \ref{rmk:SymMarixDiag} these operations have no effect on other matrices.
        All zero columns of these matrices that were zero in the semi-component module are zero and the other columns have a $-1$ in the first row.
        These factors of $-1$ may be eliminated by row operations, as by definition there is exactly one $1$ in the entries below them.
        So if we add every row in turn to the top row, this makes the top row zero except for the first entry.
        Again by Remark \ref{rmk:SymMarixDiag}, this has the consequence of subtracting the first column to every other column in the matrices whose domain is the codomain of the matrices on which the row operations were performed. 
        
        By moving through the covering relations on the partial order with respect to their source value in the grading function, we may continue the above row operations ending at some maximal elements of the partial order in a consistent way.
        That is, on each matrix representing any given cover relation the order due to the pre-grading (see Definition \ref{def:Grading})
        ensures that all the column operations take place before the row operations. 
        
        The module now is written with respect to a basis where it can immediately be seen as the direct sum of the original semi-component module and an interval module,
        since all matrices have the same simultaneous block decomposition.
    \end{proof}
    
    \begin{remark}\label{rmk:EtensionUniquness}
        Given a $p\in P$ such that all generators lie in some $p\leq q\in P$,
         we can still derive a corresponding semi-component extension satisfying Theorem \ref{thm:SemiExtension} if Construction \ref{def:SemiExtension} were changed to leave all $s\leq p$ as zero dimensional vector space and the extension of Definition $\ref{def:SemiExtension}$ is applied as before to all $r \geq p$.
    \end{remark}
    
    Together Theorems \ref{thm:ComponentSplit} and \ref{thm:SemiExtension} show that semi-component modules play an important role in understanding the indecomposables of component modules as highlighted by the following corollary.
	
	\begin{corollary}\label{cor:Inverse}
	    Let $C$ be a component module over a partial  order $P$ with a minimal generator.
	    Then there is a unique decomposition
	    \begin{equation*}
            C = S \oplus I,
        \end{equation*}
        where $S$ is a semi-component module and $I$ an interval over $P$.
	    In particular $C$ is the component extension of $S$ for some choice of minimal generator in the sense of Remark \ref{rmk:EtensionUniquness}.
	\end{corollary}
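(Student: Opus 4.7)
The plan is to derive existence from Theorems \ref{thm:ComponentSplit} and \ref{thm:SemiExtension} and uniqueness from the Krull--Remak--Schmidt theorem, which applies because $C$ is a finite-dimensional quiver representation (Section \ref{sec:Quiver}).

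For existence I would split into two cases according to the number of generators of $C$. If $C$ has a single generator, then by Lemma \ref{lem:ComponentGenerator} and Definition \ref{def:ComponentModule} every vector space $C(q)$ is either zero or one-dimensional and the structure maps are the identity where possible, so $C$ is itself an interval module and we may take $I=C$ and $S=0$. If $C$ has at least two generators, at least one of which is minimal by hypothesis, Theorem \ref{thm:ComponentSplit} directly supplies a decomposition $C = I \oplus S$ with $I$ an interval and $S$ a non-trivial semi-component module.

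Next I would check that $C$ is the component extension of this $S$ in the sense of Remark \ref{rmk:EtensionUniquness}. Let $p\in P$ be the position of the minimal generator $g_1$ of $C$. By the proof of Theorem \ref{thm:ComponentSplit}, $I$ is exactly the interval supported on the up-set $\{q\in P : p\leq q\}$, and $S$ is generated by the shifted elements $g_i-g_1$ for $i\geq 2$, whose support is contained in the up-set of $p$. Comparing with Construction \ref{def:SemiExtension} as modified in Remark \ref{rmk:EtensionUniquness}, the matrices representing the structure maps of $C$ in the component basis $\{g_1,\,g_i-g_1\}_{i\geq 2}$ are obtained from those of $S$ by prepending the column and row prescribed by the construction: the extra column records that $g_1$ is carried to itself on the up-set of $p$, and the extra row records which semi-component basis elements need an additional summand $g_1$ in order to match the original component basis of $C$. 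This identifies $C$ with the component extension of $S$ relative to the choice of $p$.

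For uniqueness, suppose $C = S_1\oplus I_1 = S_2\oplus I_2$ are two decompositions into a semi-component module and an interval. Since persistence modules here are finite-dimensional quiver representations, Krull--Remak--Schmidt (Section \ref{sec:Quiver}) guarantees a unique decomposition into indecomposables up to reordering and isomorphism. Decomposing $S_1$ and $S_2$ further into indecomposables and noting that $I_1,I_2$ are themselves indecomposable, the two resulting indecomposable multisets must coincide. The interval summand is pinned down by its support, which in both cases is the up-set of the position of the minimal generator of $C$, so $I_1\cong I_2$. Cancelling by $I_1$ (valid in any Krull--Remak--Schmidt category) yields $S_1\cong S_2$, proving uniqueness. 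The main obstacle I anticipate is the bookkeeping in the middle step: verifying that the explicit bases produced by the idempotent in Theorem \ref{thm:ComponentSplit} match the explicit augmented matrices of Construction \ref{def:SemiExtension} on the nose, rather than merely up to an unspecified change of basis.
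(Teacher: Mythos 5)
Your approach matches the paper's: existence via Theorem~\ref{thm:ComponentSplit}, identification of $I$ as the up-set interval at the minimal generator, uniqueness via Krull--Remak--Schmidt, and recovery of $C$ from $S$ by component extension. The one genuinely useful addition you make is handling the single-generator case explicitly, which Theorem~\ref{thm:ComponentSplit} excludes by hypothesis but the corollary does not; the paper silently glosses over this.

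Your worry in the final sentence is unnecessary, and the paper shows how to avoid the bookkeeping entirely: you do not need to check that the idempotent's bases match Construction~\ref{def:SemiExtension} on the nose. Instead, apply Theorem~\ref{thm:SemiExtension} (in the form of Remark~\ref{rmk:EtensionUniquness}) to $S$ with the minimal generator placed where it sat in $C$; this yields a component module that decomposes as $S\oplus I$. Since $C$ also decomposes as $S\oplus I$, Krull--Remak--Schmidt gives $C\cong$ the extension, which is all the corollary asserts. This sidesteps the base-matching issue you anticipate.

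One point worth flagging, which both your proof and the paper's share as a mild imprecision: the uniqueness argument pins down the specific $I$ produced by Theorem~\ref{thm:ComponentSplit}, but interval modules are themselves semi-component modules, so one cannot immediately rule out an alternative splitting $C = S'\oplus I'$ in which a different interval is peeled off and the up-set interval is absorbed into $S'$. For a concrete instance, take $\mathbb{K}^2\xrightarrow{[1\;1]}\mathbb{K}$ over $A_2$: it decomposes both as $(\mathbb{K}\to\mathbb{K})\oplus(\mathbb{K}\to 0)$ and with the labels of ``interval'' and ``semi-component'' swapped. The uniqueness claim should therefore be read as uniqueness of the decomposition in which $I$ is the up-set interval at the minimal generator (equivalently, the decomposition produced by Theorem~\ref{thm:ComponentSplit}); with that reading your Krull--Remak--Schmidt cancellation argument is exactly right.
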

	
	\begin{proof}
	    The interval module $I$ obtained in the proof of Theorem \ref{thm:ComponentSplit} is uniquely determined up to isomorphism as the module with dimension $1$ in every position greater than a minimal generator and identity maps between them.
	    Hence the uniqueness of $S$ follows from the Krull-Remak-Schmidt theorem.
	    
	    Using the proof of Theorem \ref{thm:SemiExtension} extension, the extension of $S$ with the minimal generator in the position it occurred in $C$
	    decomposes as $S \oplus I$ and is therefore $C$.
	\end{proof}

	In other words the corollary states under its conditions that up maintaining the position of the minimal generator the decomposition of theorem \ref{thm:ComponentSplit} and component extension are inverses.


    \subsection{Decompositions of semi-component modules}\label{sec:SemiDecomposition}

    Although Theorems \ref{thm:SemiExtension} and \ref{thm:ComponentSplit} show semi-component modules form an interesting sub-class of modules arsing from decompositions of component modules, they are not all indecomposable (see Example \ref{exam:IncomparableComponent}), do not contain all their indecomposable modules and still contain a large number of possible indecomposables.
    We first give an example showing that semi-component modules do decompose into two non-semi-component modules.
    
    \begin{example}\label{exam:SemiDecomposition}
        The following semi-component module has indecomposable decomposition,  
	    \begin{equation*}
    	    \begin{tikzpicture}[>=triangle 45]
                \node (A) at (0,0) {$\mathbb{K}^4$};
                \node (B) at (0,3) {$\mathbb{K}^2$};
                \node (C) at (3,0) {$\mathbb{K}^2$};
                \node (D) at (-3,0) {$\mathbb{K}^2$};
                \node (E) at (0,-3) {$\mathbb{K}^2$};
                \draw [semithick,->] (A) -- (B) node [right,midway] 
                {$\begin{bmatrix} 1 & 1 & 0 & 0 \\ 0 & 0 & 0 & 1 \end{bmatrix}$};
                \draw [semithick,->] (A) -- (C) node [midway,below]
                {$\begin{bmatrix} 0 & 0 & 1 & 1 \\ 0 & 1 & 0 & 0 \end{bmatrix}$};
                \draw [semithick,->] (A) -- (D) node [midway,above]
                {$\begin{bmatrix} 1 & 0 & 0 & 1 \\ 0 & 0 & 1 & 0 \end{bmatrix}$};
                \draw [semithick,->] (A) -- (E) node [left,midway]
                {$\begin{bmatrix} 0 & 1 & 1 & 0 \\ 1 & 0 & 0 & 0 \end{bmatrix}$};
                \node (Z) at (3.5,0) {$=$};
                \node (A') at (6.5,0) {$\mathbb{K}^2$};
                \node (B') at (6.5,1.5) {$\mathbb{K}$};
                \node (C') at (8,0) {$\mathbb{K}$};
                \node (D') at (4,0) {$\mathbb{K}$};
                \node (E') at (6.5,-1.5) {$\mathbb{K}$};
                \draw [semithick,->] (A') -- (B') node [right,midway] 
                {$\begin{bmatrix} 1 & 1 \end{bmatrix}$};
                \draw [semithick,->] (A') -- (C') node [midway,below]
                {$\begin{bmatrix} 0 & 1 \end{bmatrix}$};
                \draw [semithick,->] (A') -- (D') node [midway,above]
                {$\begin{bmatrix} \frac{1+\sqrt{5}}{2} & 1 \end{bmatrix}$};
                \draw [semithick,->] (A') -- (E') node [left,midway]
                {$\begin{bmatrix} 1 & 0 \end{bmatrix}$};

                \node (X) at (8.5,0) {$\oplus$};
                \node (A'') at (10.5,0) {$\mathbb{K}^2$};
                \node (B'') at (10.5,1.5) {$\mathbb{K}$};
                \node (C'') at (12,0) {$\mathbb{K}.$};
                \node (D'') at (9,0) {$\mathbb{K}$};
                \node (E'') at (10.5,-2) {$\mathbb{K}$};
                \draw [semithick,->] (A'') -- (B'') node [right,midway] 
                {$\begin{bmatrix} 1 & 1 \end{bmatrix}$};
                \draw [semithick,->] (A'') -- (C'') node [midway,below]
                {$\begin{bmatrix} 0 & 1 \end{bmatrix}$};
                \draw [semithick,->] (A'') -- (D'') node [midway,above]
                {$\begin{bmatrix} 1 & 0 \end{bmatrix}$};
                \draw [semithick,->] (A'') -- (E'') node [left,midway]
                {$\begin{bmatrix} \frac{1+\sqrt{5}}{2} & 1 \end{bmatrix}$};
            \end{tikzpicture}
        \end{equation*}
        Both of the decomposed modules can easily be seen to be incomparable and non-semi-component by the argument given in the example of \cite[\S 5.2]{Carlsson2009}.
        Using Lemma \ref{lem:DecompositionCharectersiation} the decomposition is obtained using an idempotent endomorphism given by linear endomorphisms
        \begin{equation*}
            A=\frac{5}{\sqrt{5}}
            \begin{bmatrix} 
                \frac{1+\sqrt{5}}{2} & 1 & 1 & 0 \\ 
                0 & \frac{\sqrt{5}-1}{2} & -1 & -1 \\
                1 & 0 & \frac{1+\sqrt{5}}{2} & 1 \\
                -1 & -1 & 0 & \frac{\sqrt{5}-1}{2}
            \end{bmatrix}
            \;\;\;\;\;\;\;
            B=\frac{5}{\sqrt{5}}
            \begin{bmatrix} 
                \frac{1+\sqrt{5}}{2} & -1 \\ 
                -1 & \frac{\sqrt{5}-1}{2}
            \end{bmatrix}
            \;\;\;\;\;\;\;
            C=\frac{5}{\sqrt{5}}
            \begin{bmatrix} 
                \frac{\sqrt{5}-1}{2} & 1 \\ 
                1 & \frac{1+\sqrt{5}}{2}
            \end{bmatrix}
            ,
        \end{equation*}
        where the first matrix is for the central vector space, the second matrix for the top and bottom vector spaces and the last matrix for the left and right vector spaces.
        The choice of basis of the kernel and image of $A$ are
        \begin{equation*}
            \Bigg{\langle} 
            \begin{bmatrix} 1 \\ 0 \\ -\frac{1+\sqrt{5}}{2} \\ \frac{1+\sqrt{5}}{2} \end{bmatrix},
            \begin{bmatrix} 0 \\ 1 \\ -1 \\ \frac{1+\sqrt{5}}{2} \end{bmatrix}
            \Bigg{\rangle} \text{  and  } \Bigg{\langle}
            \begin{bmatrix} \frac{1+\sqrt{5}}{2} \\ 0 \\ 1 \\ -1 \end{bmatrix},
            \begin{bmatrix} 1 \\ \frac{\sqrt{5}-1}{2} \\ 0 \\ -1 \end{bmatrix}
            \Bigg{\rangle},
        \end{equation*}
        respectively.
        The chosen bases of the four kernels and the four images of $B$ and $C$ in order top, left, bottom and right are
        \begin{equation*}
            \begin{bmatrix} 1 \\ \frac{1+\sqrt{5}}{2} \end{bmatrix},\;
            \begin{bmatrix} \frac{\sqrt{5}-1}{2} \\ 1 \end{bmatrix},\;
            \begin{bmatrix} -\frac{1+\sqrt{5}}{2} \\ 1  \end{bmatrix},\;
            \begin{bmatrix} \frac{1+\sqrt{5}}{2} \\ -1 \end{bmatrix} \text{  and  }
            \begin{bmatrix} \frac{1+\sqrt{5}}{2} \\ -1  \end{bmatrix},\;
            \begin{bmatrix} -1 \\ \frac{\sqrt{5}-1}{2} \end{bmatrix},\;
            \begin{bmatrix} \frac{\sqrt{5}-1}{2} \\ 1 \end{bmatrix}, \; 
            \begin{bmatrix} \frac{\sqrt{5}-1}{2} \\ 1 \end{bmatrix}
        \end{equation*}
        respectively.
    \end{example}
    
    The next example demonstrates that even in a relatively simple case there can still be infinitely many indecomposable semi-component modules over a given partial order.
    
    \begin{example}
        Let us consider the following family of semi-component representations of the $4$-star quiver:
        \begin{equation*}
        \begin{tikzpicture}[>=triangle 45]
        \node (A) at (0,0) {$\mathbb{K}^{2m}$};
        \node (B) at (0,3) {$\mathbb{K}^m$};
        \node (C) at (3,0) {$\mathbb{K}^m$};
        \node (D) at (-3,0) {$\mathbb{K}^m$};
        \node (E) at (0,-3) {$\mathbb{K}^m$};
        \draw [semithick,->] (A) -- (B) node [right,midway] {$\begin{bmatrix} I & 0  \end{bmatrix}$};
        \draw [semithick,->] (A) -- (C) node [midway,below] {$\begin{bmatrix} 0 & I \end{bmatrix}$};
        \draw [semithick,->] (A) -- (D) node [midway,above]
        {$\begin{bmatrix} I & I \end{bmatrix}$};
        \draw [semithick,->] (A) -- (E) node [left,midway]
        {$\begin{bmatrix} I & J_m(\lambda) \end{bmatrix}$};
        \end{tikzpicture}
        \end{equation*}
        where $J_m(\lambda)$ is the Jordan matrix
        \begin{equation*}
        \begin{bmatrix} \lambda & 1 & 0 & \cdots & 0 \\
        0 & \lambda & 1 & \cdots & 0 \\
        \vdots & \vdots & \vdots & \ddots & \vdots \\
        0 & 0 & 0 & \cdots & 1 \\
        0 & 0 & 0 & \cdots & \lambda
        \end{bmatrix},
        \end{equation*}
        and we use the value $\lambda = 0$ so that our representations are all semi-component modules. Then it is easy to show that all these representations are indecomposable for every $m\in \mathbb{N}$. This shows that, in general, there are infinitely many semi-component representations of quivers that are not of the Dynkin type.
        In fact, all indecomposables of this quiver can be classified, see \cite{Buchet2018}.
    \end{example}
    

	\bibliographystyle{plain}  
	\bibliography{References}	
	
\end{document}